\newtheorem{theorem}{Theorem}[section]
\newtheorem{open question}[theorem]{Open Question}
\theoremstyle{definition}
\newtheorem{definition}[theorem]{Definition}
\newtheorem{example}[theorem]{Example}
\newtheorem{remark}[theorem]{Remark}
\numberwithin{equation}{section}
\def\DJ{\leavevmode\setbox0=\hbox{D}\kern0pt\rlap
 {\kern.04em\raise.188\ht0\hbox{-}}D}
\begin{document}
\title[On Proximal contractions and Best proximity points ]{On Proximal contractions via implicit relations and Best proximity points }

\author[P.\ Mondal, H.\ Garai, L.K. \ Dey]
{Pratikshan Mondal$^{1}$, Hiranmoy Garai$^{2}$, Lakshmi Kanta Dey$^{3}$}

\address{{$^{1}$\,} Department of Mathematics,                                         					\newline \indent Durgapur Government College, 
                    Durgapur, India.}
                    \email{real.analysis77@gmail.com}
\address{{$^{2}$\,} Department of Mathematics,
                    \newline \indent National Institute of Technology
                    Durgapur, India.}
                    \email{hiran.garai24@gmail.com}
\address{{$^{3}$\,} Department of Mathematics,
                   \newline \indent National Institute of Technology
                    Durgapur, India.}
                    \email{lakshmikdey@yahoo.co.in}
\subjclass[2010]{$47$H$10$, $54$H$25$.}
\begin{abstract}
In this paper, we employ two types of implicit relations to define some new kind of proximal contractions and study about their best proximity points. More precisely, we use two class of functions $\mathcal{A}$ and $\mathcal{A}'$ to explore proximal $\mathcal{A}$, $\mathcal{A}'$-contractions of first and second type and strong proximal $\mathcal{A}$, $\mathcal{A}'$-contractions. 
We investigate the existence of best proximity point results of the same. It is worth mentioning that the well-known results of Sadiq Basha [J. Approx. Theory, $2011$] on proximal contractions are the special cases of our obtained results. We authenticate our results by suitable examples.

\vspace{0.5 cm}
\textbf{Keywords:} Best proximity point;  proximal contractions; strong proximal contraction;  approximative compactness.

\end{abstract}

\maketitle

\section{Introduction }\label{sec:1}
Best proximity point theory deals with a natural generalization of fixed point theory by routing the method of computing an optimal approximate solution to the equation $Sx=x$, where $S:G \to  H$ a non-self mapping, $G,H$ being two disjoint subsets of a metric space $(M,d)$. Since for $x\in G$, we always have $d(x,Sx)\geq dist(G,H)$, where   $dist(G, H)=\inf\{d(x, y): x\in G, y\in H\}$, it follows that an element $x\in G$ will be approximate optimal solution of $Sx=x$ if 
$d(x,Sx)= dist(G,H)$. Such a point $`x$' is known as best proximity point of $S$, and the branch of mathematics dealing with best proximity points is known as best proximity point theory. There are a numerous number of articles which analyze several kinds of contractions  for the existence of  best proximity point(s) for single-valued as well as multivalued mappings. Interested readers may consult with the papers \cite{B11, B19, AV09, KRR19, K12, MD17} for single-valued mappings and \cite {AS09, KKL08, KRV03, SV97, WPB09} for multivalued mappings.

The study of best proximity point theory by using different contractions had been enriched in $2011$ with a new kind of contraction by Sadiq Basha \cite{BS11}. In \cite{BS11}, he came with some new kind of contractions such as proximal contraction of the first kind, proximal contraction of the second kind, strong proximal contraction of the first kind.  
\begin{definition} { (\cite[p. 3, Definitions 2.2-2.4]{BS11}).}
Let $(M, d)$ be a metric space and  $G, H$  two non-empty subsets of $M$. A mapping $S:G\to H$ is said to be a 
\begin{itemize}
\item[{(i)}]
proximal contraction of the first kind if there exists $\alpha\in [0,1)$ satisfying 
$$ \left. \begin{array}{ll}
    d(u_1, Sx_1)=dist(G, H)\\
    d(u_2, Sx_2)=dist(G, H)
\end{array}\right\}\\ 
\Longrightarrow d(u_1, u_2)\le \alpha\ d(x_1, x_2)
$$  for all $u_1, u_2, x_1, x_2\in G$,
\item[{(ii)}] proximal contraction of the second kind if there exists an $\alpha\in [0,1)$ satisfying 
$$ \left. \begin{array}{ll}
    d(u_1, Sx_1)=dist(G, H)\\
    d(u_2, Sx_2)=dist(G, H)
\end{array}\right\}\\ 
\Longrightarrow d(Su_1, Su_2)\le \alpha\ d(Sx_1, Sx_2)
$$  for all $u_1, u_2, x_1, x_2\in G$,
\item [{(iii)}] a strong proximal contraction of the first kind if there exists $\alpha\in [0,1)$ such that for all $u_1, u_2, x_1, x_2\in G$ and for all $\gamma\in [1, 2)$
$$ \left. \begin{array}{ll}
    d(u_1, Sx_1)\le \gamma dist(G, H)\\
    d(u_2, Sx_2)\le \gamma dist(G, H)
\end{array}\right\}\\ 
\Rightarrow d(u_1, u_2)\le \alpha\ d(x_1, x_2)+(\gamma-1)dist(G, H).
$$
\end{itemize}
\end{definition}

In the above definitions of proximal contractions, we see that the definitions involves  the displacement $d(x_1,x_2)$ only. It is known that for two points $x_1,x_2$, the other displacements are $d(Sx_1,x_1),\ d(Sx_2,x_2),\ d(Sx_1,x_2)$ and $ d(Sx_2,x_1)$, and there are a plenty number of contractions which involves these displacements, and these contractions play a crucial role in the theory of fixed point and best proximity point.   If we compare Definition \ref{d1} with some usual well-known contractions, then one can notice that $u_1,u_2$ play the roles of $Sx_1, Sx_2$ in Definition \ref{d1}. So if someone requires to extend the proximal contractions by using the displacements $d(Sx_1,x_1),$ $d(Sx_2,x_2),\ d(Sx_1,x_2),\ d(Sx_2,x_1)$, then one has to work with $d(u_1,x_1)$, $(u_2,x_2)$, $d(u_1,x_2)$, $d(u_2,x_1)$ respectively. So it will be  impressive works if the concepts of proximal contractions can be enlarged by involving the displacements $d(u_1,x_1)$, $(u_2,x_2)$, $d(u_1,x_2)$, $d(u_2,x_1)$.

Motivated by this fact, in the current paper, we broaden the proximal contractions  by associating all the five displacements  $d(x_1,x_2)$, $d(u_1,x_1)$, $ d(u_2,x_2),$ $d(u_1,x_2)$ and $d(u_2,x_1)$. To continue this, we introduce proximal $\mathcal{A}$-contractions which involve  $d(x_1,x_2),$ $ d(u_1,x_1)$ and $ d(u_2,x_2)$; and proximal $\mathcal{A}'$-contractions which involve  $d(x_1,x_2),$ $ d(u_1,x_2)$ and $ d(u_2,x_1)$. More specifically, we define \textit{proximal $\mathcal{A}$-contractions of first and second type;  proximal $\mathcal{A}'$-contractions of first and second type;}  \textit{strong proximal $\mathcal{A}$-contraction} and \textit{strong proximal $\mathcal{A}'$-contraction}. After this, we study on adequate sufficient conditions to ensure the existence of best proximity point(s) of the above-mentioned contractions, and access the required adequate sufficient conditions which will be presented in next section. Along with this, we give a number of examples to support the validity of our proven results.

Throughout this paper, $\mathcal{A}$ and $\mathcal{A}'$ will contain all functions $f:\mathbb{R}_+^3\to \mathbb{R}$ having the  properties $(\mathcal{A}_1)$-$(\mathcal{A}_2)$ and $(\mathcal{A}'_1)$-$(\mathcal{A}'_3)$ respectively, where
\begin{itemize}
\item[{($\mathcal{A}_1$)}] there exists $k\in [0, 1)$ such that if $r\le f(s,s,r)$ or $r\le f(r, s, s)$, then  $r\le ks$ for all $r,s\in \mathbb{R}_+$;
\item[{($\mathcal{A}_2$)}] there exists $\alpha\in [0, 1)$ such that $f(r,0,0)\le \alpha r$;
\end{itemize}
and
\begin{itemize}
\item[{($\mathcal{A}'_1$)}] there exists $k\in [0, 1)$ such that if $r \le f(s,0,  r + s)$, then $r\le ks$ for all $r,s\in \mathbb{R}_+$;
\item[{($\mathcal{A}'_2$)}] if $t\le t_1$, then $f(r,s,t)\le f(r,s,t_1 )$ for all $r,s, t, t_1\in \mathbb{R}_+$;
\item[{($\mathcal{A}'_3$)}] if $r\le f(r, r, r)$, then $r=0$.
\end{itemize}
For examples and properties of such collection of mappings, we refer  the readers to \cite{AZS08, GDC, MGD}.
\section{Main results and their proofs}
Throughout this section, $(M,d)$ will denote a metric space and $G,\ H$ will denote two non-empty subsets of $M$, and $G_0$, $H_0$ will denote the following:
\begin{align*}
G_0&=\{x \in G : d(x, y) = dist(G,H)\ \mbox{ for some}\ y \in H\}\\
H_0&=\{y \in H : d(x, y) = dist(G,H)\ \mbox{ for some}\ x \in G\}.
\end{align*}

First, we define proximal $\mathcal{A}$, $\mathcal{A}'$ -contractions  of the first kind in the following way:
\begin{definition}  \label{d1}
 A mapping $S:G\to H$ is said to be a 
\begin{itemize}
\item[{(i)}]
proximal $\mathcal{A}$-contraction of the first type if there exists an $f\in \mathcal{A}$ satisfying 
$$ \left. \begin{array}{ll}
    d(u_1, Sx_1)=dist(G, H)\\
    d(u_2, Sx_2)=dist(G, H)
\end{array}\right\}\\ 
\Longrightarrow d(u_1, u_2)\le f(d(x_1, x_2), d(u_1, x_1), d(u_2, x_2))
$$  for all $u_1, u_2, x_1, x_2\in G$,

\item[{(ii)}]  proximal $\mathcal{A}'$-contraction of the first type if there exists an $f\in \mathcal{A}'$ satisfying 
$$ \left. \begin{array}{ll}
    d(u_1, Sx_1)=dist(G, H)\\
    d(u_2, Sx_2)=dist(G, H)
\end{array}\right\}\\ 
\Longrightarrow d(u_1, u_2)\le f(d(x_1, x_2), d(u_1, x_2), d(u_2, x_1))
$$ for all $u_1, u_2, x_1, x_2\in G$.
\end{itemize}
\end{definition}
Our first two results regarding the existence of best proximity point of the above two proximal contractions are as follows:
\begin{theorem}\label{t4}
Suppose that $(M,d)$ is complete, $G, H$ are   closed and $G_0\ne \emptyset.$
Let $S:G\to H$ be a continuous proximal $\mathcal{A}$-contraction of the first kind such that $S(G_0)$ resides in $H_0$. Then $S$ has a unique best proximity point.
\end{theorem}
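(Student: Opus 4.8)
The plan is to run the classical Picard-type proximal iteration and extract a Cauchy sequence from property $(\mathcal{A}_1)$, then pass to the limit using completeness and the continuity of $S$. First I would fix any $x_0\in G_0$. Since $Sx_0\in S(G_0)\subseteq H_0$, the definition of $H_0$ supplies a point $x_1\in G$ with $d(x_1,Sx_0)=dist(G,H)$, and such an $x_1$ automatically lies in $G_0$. Iterating this construction (each step again using $S(G_0)\subseteq H_0$) produces a sequence $\{x_n\}\subseteq G_0$ with
\[
d(x_{n+1},Sx_n)=dist(G,H)\qquad\text{for all } n\ge 0.
\]

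The key step is to apply the defining implication of a proximal $\mathcal{A}$-contraction to the two relations $d(x_n,Sx_{n-1})=dist(G,H)$ and $d(x_{n+1},Sx_n)=dist(G,H)$, with the labelling $u_1=x_n,\ x_1=x_{n-1},\ u_2=x_{n+1},\ x_2=x_n$. Writing $r=d(x_{n+1},x_n)$ and $s=d(x_n,x_{n-1})$ and using the symmetry of $d$, this yields $r\le f(s,s,r)$, which is exactly one of the two forms covered by $(\mathcal{A}_1)$. Hence there is $k\in[0,1)$ with $r\le ks$, i.e. $d(x_{n+1},x_n)\le k\,d(x_n,x_{n-1})$. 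Consequently the consecutive displacements decay geometrically, $\{x_n\}$ is Cauchy, and by completeness of $M$ together with closedness of $G$ it converges to some $x^*\in G$.

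To finish existence, continuity of $S$ gives $Sx_n\to Sx^*$, so letting $n\to\infty$ in $d(x_{n+1},Sx_n)=dist(G,H)$ and using continuity of the metric yields $d(x^*,Sx^*)=dist(G,H)$; thus $x^*$ is a best proximity point. For uniqueness I would take two best proximity points $x^*,y^*$, apply the contraction with $u_1=x_1=x^*$ and $u_2=x_2=y^*$ to obtain $d(x^*,y^*)\le f\bigl(d(x^*,y^*),0,0\bigr)$, and then invoke $(\mathcal{A}_2)$ to conclude $d(x^*,y^*)\le\alpha\,d(x^*,y^*)$ with $\alpha\in[0,1)$, forcing $x^*=y^*$.

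The main obstacle I anticipate is purely the bookkeeping of which arguments of $f$ receive which displacements: the naive labelling $u_1=x_{n+1},u_2=x_n$ produces $r\le f(s,r,s)$, a form not directly handled by $(\mathcal{A}_1)$, so one must use the reversed labelling above to land on $r\le f(s,s,r)$. Once the inequality is matched to $(\mathcal{A}_1)$, the remainder is the standard contraction-to-Cauchy-to-limit argument, with $(\mathcal{A}_2)$ reserved solely for uniqueness.
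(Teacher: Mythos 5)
Your proposal is correct and follows essentially the same route as the paper: the same proximal Picard iteration, the same careful relabelling $u_1=x_n,\ x_1=x_{n-1},\ u_2=x_{n+1},\ x_2=x_n$ to land on the form $r\le f(s,s,r)$ handled by $(\mathcal{A}_1)$, and the same Cauchy--limit--continuity argument for existence. The only (immaterial) divergence is in uniqueness, where you invoke $(\mathcal{A}_2)$ to get $d(x^*,y^*)\le \alpha\, d(x^*,y^*)$, whereas the paper applies $(\mathcal{A}_1)$ in the form $r\le f(r,s,s)$ with $s=0$ to conclude $d(u,u^*)\le k\cdot 0$; both are one-line closures of the same inequality.
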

\begin{proof}
Since $G_0$ is non-empty, we choose an element $u_0\in G$. Then $Su_0\in S(G_0)\subset H_0$. Then we find an element $u_1\in G_0$ such that $d(u_1, Su_0)=dist(G, H)$.

Similarly, $Su_1\in H_0$ and in the same way we find an element $u_2\in G_0$ such that $d(u_2, Su_1)=dist(G, H)$. 

Continuing this process, we arrive at a sequence $\{u_n\}$ of elements of $G_0$ such that 
$$d(u_{n+1}, Su_n)=dist(G, H)\ \mbox{ for all }n\in \mathbb{N}.$$
Now note that
$$d(u_n, Su_{n-1})=dist(G, H)$$
and
$$d(u_{n+1}, Su_n)=dist(G, H)$$
for all $n\in \mathbb{N}$.

Since $S$ is a proximal $\mathcal{A}$-contraction of the first type, there exists an $f\in \mathcal{A}$ such that
$$d(u_n, u_{n+1})\le f(d(u_{n-1}, u_n), d(u_n, u_{n-1}), d(u_{n+1}, u_n)).$$
So there exists a $k\in [0, 1)$ such that
$$d(u_n, u_{n+1})\le kd(u_{n-1}, u_{n})$$
for all $n\in \mathbb{N}$ which, in fact, implies that
$$d(u_n, u_{n+1})\le k^nd(u_1, u_0).$$
Now for any $m,n\in \mathbb{N}$, we have
\begin{align*}
d(u_{m+n}, u_n)&\le d(u_{m+n}, u_{m+n-1})+d(u_{m+n-1}, u_{m+n-2})+\cdots +d(u_{n+1}, u_n)\\
&\le (k^{m+n-1}+k^{m+n-2}+\cdots+k^n)d(u_1, u_0)\\
&=k^n\frac{1-k^m}{1-k}d(u_1, u_0)\longrightarrow 0 \mbox{ as }m,n\to \infty.
\end{align*}
Therefore, $\{u_n\}$ is a Cauchy sequence in $G$. Being a closed subset of a complete metric space $(M, d)$, $G$ supply an element $u$ such that $u_n\longrightarrow u$ as $n\to \infty$.

Then, by continuity of $S$, we get $Su_n\to Su$ as $n\to \infty$ and consequently $d(u_{n+1}, Su_n)\to d(u, Su)$.

Now $d(u_{n+1}, Su_n)=dist(G, H)$ for all $n\in \mathbb{N}$, confirms that $d(u, Su)=d(G, H)$ which shows that $u$ is a best proximity point of $S$.

Let $u^*\in G$ be such that $d(u^*, Su^*)=dist(G, H)$. Then we have
$$d(u, u^*)\le f((d(u, u^*), d(u, u), d(u^*, u^*))=f(d(u, u^*), 0, 0)$$
which implies that
$$d(u, u^*)\le k\cdot 0=0.$$ 
Hence $u=u^*$ and the theorem is proved.
\end{proof}
\begin{theorem}\label{t8}
Suppose that $(M,d)$ is complete, $G, H$ are   closed and $G_0\ne \emptyset.$
Let $S:G\to H$ be a continuous proximal $\mathcal{A}'$-contraction of the first kind such that $S(G_0)$ resides in $H_0$. Then $S$ has a unique best proximity point in $G$.
\end{theorem}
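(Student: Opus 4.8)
The plan is to follow the template established in the proof of Theorem~\ref{t4}, adapting only the contraction step so as to exploit the structural properties $(\mathcal{A}'_1)$--$(\mathcal{A}'_3)$ of the class $\mathcal{A}'$. First I would construct, exactly as before, a sequence $\{u_n\}$ in $G_0$ satisfying $d(u_{n+1}, Su_n)=dist(G,H)$ for all $n\in\mathbb{N}$; this is possible because $G_0\neq\emptyset$ and $S(G_0)\subset H_0$, and it requires no new ideas.

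The heart of the argument is to establish a geometric contraction for the consecutive distances. Writing $r=d(u_n,u_{n+1})$ and $s=d(u_{n-1},u_n)$, and applying the defining implication of a proximal $\mathcal{A}'$-contraction of the first type with $x_1=u_{n-1}$, $u_1=u_n$, $x_2=u_n$, $u_2=u_{n+1}$ (each of which feeds a legitimate equality $d(u_i,Sx_i)=dist(G,H)$), I obtain
$$r\le f\big(s,\ d(u_n,u_n),\ d(u_{n+1},u_{n-1})\big)=f\big(s,\ 0,\ d(u_{n+1},u_{n-1})\big).$$
The step I expect to be the main obstacle---and the very place where the $\mathcal{A}'$ axioms are tailored to fit---is converting this into the form demanded by $(\mathcal{A}'_1)$. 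By the triangle inequality one has $d(u_{n+1},u_{n-1})\le r+s$, so monotonicity in the third slot, namely $(\mathcal{A}'_2)$, yields $f(s,0,d(u_{n+1},u_{n-1}))\le f(s,0,r+s)$, whence $r\le f(s,0,r+s)$. Now $(\mathcal{A}'_1)$ supplies a $k\in[0,1)$ with $r\le ks$, that is, $d(u_n,u_{n+1})\le k\,d(u_{n-1},u_n)$.

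With this contraction in hand, the remainder mirrors Theorem~\ref{t4} essentially verbatim: iterating gives $d(u_n,u_{n+1})\le k^n d(u_1,u_0)$, the standard geometric-series estimate shows $\{u_n\}$ is Cauchy, closedness of $G$ in the complete space $M$ produces a limit $u$, and continuity of $S$ together with $d(u_{n+1},Su_n)=dist(G,H)$ forces $d(u,Su)=dist(G,H)$, so $u$ is a best proximity point. For uniqueness I would suppose $u^{*}$ also satisfies $d(u^{*},Su^{*})=dist(G,H)$ and apply the contraction with $x_1=u_1=u$ and $x_2=u_2=u^{*}$, obtaining
$$d(u,u^{*})\le f\big(d(u,u^{*}),\ d(u,u^{*}),\ d(u,u^{*})\big).$$
Property $(\mathcal{A}'_3)$ then forces $d(u,u^{*})=0$, so $u=u^{*}$, completing the proof.
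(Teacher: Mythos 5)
Your proposal is correct and follows the paper's proof essentially verbatim: the same pairing $(x_1,u_1)=(u_{n-1},u_n)$, $(x_2,u_2)=(u_n,u_{n+1})$ to get $d(u_n,u_{n+1})\le f\big(d(u_{n-1},u_n),0,d(u_{n+1},u_{n-1})\big)$, the same use of the triangle inequality with $(\mathcal{A}'_2)$ to reach the form $r\le f(s,0,r+s)$ required by $(\mathcal{A}'_1)$, the same geometric-series Cauchy argument and continuity step, and the same uniqueness argument via $(\mathcal{A}'_3)$. No gaps.
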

\begin{proof}
We consider a sequence $\{u_n\}$ of elements of $G_0$, defined as in Theorem~\ref{t4}, such that 
$$d(u_{n+1}, Su_n)=dist(G, H)\ \mbox{ for all }n\in \mathbb{N}.$$
Now note that
$$d(u_n, Su_{n-1})=dist(G, H)$$
and
$$d(u_{n+1}, Su_n)=dist(G, H)$$
for all $n\in \mathbb{N}$.

Since $S$ is a proximal $\mathcal{A}'$-contraction of the first type, there exists an $f\in \mathcal{A}'$ such that
\begin{align*}
d(u_n, u_{n+1})&\le f(d(u_{n-1}, u_n), d(u_n, u_{n}), d(u_{n+1}, u_{n-1}))\\
&\le f(d(u_{n-1}, u_n), 0, d(u_{n+1}, u_{n})+d(u_n, u_{n-1})).
\end{align*}
So there exists a $k\in [0, 1)$ such that
$$d(u_n, u_{n+1})\le kd(u_{n-1}, u_{n})$$
for all $n\in \mathbb{N}$ which, in fact, implies that
$$d(u_n, u_{n+1})\le k^nd(u_1, u_0).$$
Now for any $m,n\in \mathbb{N}$, we have
\begin{align*}
d(u_{m+n}, u_n)&\le d(u_{m+n}, u_{m+n-1})+d(u_{m+n-1}, u_{m+n-2})+\cdots +d(u_{n+1}, u_n)\\
&\le (k^{m+n-1}+k^{m+n-2}+\cdots+k^n)d(u_1, u_0)\\
&=k^n\frac{1-k^m}{1-k}d(u_1, u_0)\longrightarrow 0 \mbox{ as }m,n\to \infty.
\end{align*}
Therefore $\{u_n\}$ is a Cauchy sequence in $G$ an since $G$ is a closed subset of the complete metric space $(M, d)$, $u_n\longrightarrow u$ as $n\to \infty$ for some $u\in G$.

Applying continuity of $S$, we find that $Su_n\to Su$ as $n\to \infty$ and therefore $d(u_{n+1}, Su_n)\to d(u, Su)$.

As $d(u_{n+1}, Su_n)=dist(G, H)$ for all $n\in \mathbb{N}$, we have $d(u, Su)=dist(G, H)$ i.e., $u$ is a best proximity point of $S$.

Let $u^*\in G$ be such that $d(u^*, Su^*)=dist(G, H)$. Since $S$ is a proximal $\mathcal{A}'$-contraction of the first type, we have
$$d(u, u^*)\le f(d(u, u^*), d(u^*, u), d(u, u^*)$$
which implies that
$$d(u, u^*)=0.$$ 
Hence $u=u^*$ and the proof is complete.
\end{proof}
Next, we give the following supporting examples:
\begin{example}
We take $M=\mathbb{R}$, $d$ as  the usual metric and choose $G=[2, \infty)$, $H=(-\infty, -1]$. Also we take $f\in \mathcal{A}$ defined by $f(r, s, t)=\frac{3}{4} \max\{r, s, t\}$ and define $S:G\to H$  by $Sx=\frac{2-3x}{4}$ for all $x\in G$.


Let $u_1, u_2, x_1, x_2\in G$ be such that $d(u_1, Sx_1)=dist(G, H)$ and $d(u_2, Sx_2)=dist(G, H)$. Then
$$4u_1+3x_1=14\ \mbox{ and }\ 4u_2+3x_2=14.$$
Now,
\begin{align*}
d(u_1, u_2)&=|u_1-u_2|\\
&=\left|\frac{14-3x_1}{4}-\frac{14-3x_2}{4}\right|\\
&=\frac{3}{4}|x_1-x_2|=\frac{3}{4}d(x_1, x_2),
\end{align*}
which yields that
$$d(u_1, u_2)\le \frac{3}{4}f(d(x_1, x_2), d(u_1, x_1), d(u_2, x_2)).$$
Therefore, $S$ is a proximal $\mathcal{A}$-contraction of first type.
So by Theorem \ref{t4}, $S$ has a unique best proximity point, viz., $u=2$.
\end{example}

\begin{example}
We choose $M=\mathbb{R}$, $d$ as the usual metric; $G=[6, 7]$, $H=[2, 3]$; $f(r, s, t)=\frac{49}{50} \max\{s, t\}$ and define
$S:G\to H$ be defined by $Sx=9-x$ for all $x\in G$.

Let $u_1, u_2, x_1, x_2\in G$ be such that $d(u_1, Sx_1)=dist(G, H)$ and $d(u_2, Sx_2)=dist(G, H)$.

Then
$$u_1+x_1=12\ \mbox{ and }\ u_2+x_2=12.$$
Without loss of generality, let us suppose that $x_1\ge x_2$. Then
\begin{align*}
d(u_1, u_2)&=|u_1-u_2|\\
&=|12-x_1-12+x_2|\\
&=x_1-x_2.
\end{align*}
Also, 
$$d(u_1, x_1)=12-2x_1\ \mbox{ and }\ d(u_2, x_2)=12-2x_2.$$
Now,
\begin{align*}
&f(d(x_1, x_2), d(u_1, x_1), d(u_2, x_2))\\
&=\frac{49}{50}\max\{x_1-x_2, 2x_1-12, 2x_2-12\}\\
&=\frac{49}{50}(2x_1-12)\ \Big[\because x_1\ge x_2, \mbox{ so, } 2x_1-12\ge 2x_2-12\Big].
\end{align*}
Therefore,
$$d(u_1, u_2)\le f(d(x_1, x_2), d(u_1, x_1), d(u_2, x_2))$$
which shows that $S$ is a proximal $\mathcal{A}$-contraction of first type. So by Theorem \ref{t4}, $S$ possesses a unique best proximity point, viz., $u=6$.
\end{example}
\begin{example}
We choose $(M,d)$ as the usual metric space $(\mathbb{R}, d)$ and $G=[3, 5]$, $H=[0, 1]$. We take $f\in \mathcal{A}'$ as $f(r, s, t)=\frac{1}{3}(s+t)$ and consider the mapping $S:G\to H$ defined by
$$ Sx=\left\{%
\begin{array}{ll}
    1 &\hbox{if $x\in [3, 4]$};\\
    5-x & \hbox{if $x\in [4, 5]$}.
\end{array}%
\right.$$ 
Let $u_1, u_2, x_1, x_2\in G$ be such that $d(u_1, Sx_1)=dist(G, H)=d(u_2, Sx_2)$.

We now consider the following cases:

\textbf{Case 1:} Let $x_1, x_2\in [3, 4]$. Then
$$|u_1-1|=2\Longrightarrow u_1=3.$$
Similarly, $u_2=3$.

So, it is obvious that
$$d(u_1, u_2)\le f\Big(d(x_1, x_2), d(u_1, x_2), d(u_2, x_1)\Big).$$

\textbf{Case 2:} Let $x_1, x_2\in [4, 5]$. Then
\begin{align*}
&|u_1-(5-x_1)|=2\\
&\Longrightarrow |u_1+x_1-5|=2\\
&\Longrightarrow u_1+x_1=7.
\end{align*}
Similarly, $u_2+x_2=7$.

Therefore, $d(u_1, u_2)=|u_1-u_2|=|x_1-x_2|$. Without loss of generality, we assume that $x_1\ge x_2$.

Again,
\begin{align*}
d(u_1, x_2)&=|u_1-x_2|\\
&=|7-x_1-x_2|\\
&=x_1+x_2-7.
\end{align*}
Similarly, $d(u_2, x_1)=x_1+x_2-7$.

Therefore,
\begin{align*}
&3d(u_1, u_2)-\{d(u_1, x_2)+d(u_2, x_1)\}\\
&=3(x_1-x_2)-\{x_1+x_2-7+x_1+x_2-7\}\\
&=3x_1-3x_2-2x_1-2x_2+14\\
&=x_1-5x_2+14\\
&\le 5-20+14=-1<0
\end{align*}
which gives
$$d(u_1, u_2)\le \frac{1}{3}\{d(u_1, x_2)+d(u_2,x_1)\}$$
that is
$$d(u_1, u_2)\le f\Big(d(x_1, x_2), d(u_1, x_2), d(u_2,x_1)\Big).$$

\textbf{Case 3:} Let $x_1\in [3, 4]$ and $x_2\in [4, 5]$. Then as in the above cases, we have $u_1=3$ and $u_2+x_2=7$.

Therefore,
\begin{align*}
d(u_1, u_2)&=|u_1-u_2|\\
&=|3-u_2|=u_2-3\\
&=4-x_2.
\end{align*}
Now,
\begin{align*}
d(u_1, x_2)+d(u_2, x_1)&=|3-x_2|+|u_2-x_1|\\
&=x_2-3+x_1+x_2-7\\
&=x_1+2x_2-10.
\end{align*}
Therefore, as in case-2, it can be shown that 
$$d(u_1, u_2)\le f\Big(d(x_1, x_2), d(u_1, x_2), d(u_2,x_1)\Big).$$
Hence combining all the cases, we see that $S$ is a proximal $\mathcal{A}'$-contraction of first type. Hence Theorem \ref{t8} ensures that $S$ admits a unique best proximity point. Note that the best proximity point is $3$.

\end{example}
Next, we give the definitions of proximal $\mathcal{A}$, $\mathcal{A}'$- contractions of the second type.
\begin{definition} \label{d2}
 A mapping $S:G\to H$ is said to be a 
\begin{itemize}
\item[{(i)}] proximal $\mathcal{A}$-contraction of the second type if there exists an $f\in \mathcal{A}$ satisfying 
$$ \left. \begin{array}{ll}
    d(u_1, Sx_1)=dist(G, H)\\
    d(u_2, Sx_2)=dist(G, H)
\end{array}\right\}\\ 
$$
$$\Longrightarrow d(Su_1, Su_2)\le f(d(Sx_1, Sx_2), d(Su_1, Sx_1), d(Su_2, Sx_2))
$$  for all $u_1, u_2, x_1, x_2\in G$,
\item[{(ii)}] proximal $\mathcal{A}'$-contraction of the second type if there exists an $f\in \mathcal{A}'$ satisfying 
$$ \left. \begin{array}{ll}
    d(u_1, Sx_1)=dist(G, H)\\
    d(u_2, Sx_2)=dist(G, H)
\end{array}\right\}\\ 
$$
$$\Longrightarrow d(Su_1, Su_2)\le f(d(Sx_1, Sx_2), d(Su_1, Sx_2), d(Su_2, Sx_1))
$$ for all $u_1, u_2, x_1, x_2\in G$.
\end{itemize}
\end{definition}
Our upcoming two results deal with the existence of best proximity point of the aforementioned contractions.
Before presenting these results, we first recall the following definition:
\begin{definition} { (\cite[p. 3, Definition 2.1]{BS11}).}
 $G$ is said to be approximatively compact with respect to $H$ if  every sequence $\{x_n\}$ in $G$ with $d(y, x_n) \to d(y, G)$ for some $y$ in $H$, has a convergent subsequence.
\end{definition}

\begin{theorem}\label{t5}
Suppose that $(M,d)$ is complete, $G, H$ are   closed, $G$ is approximately compact with respect to $H$ and $G_0\ne \emptyset.$
Let $S:G\to H$ be a continuous proximal $\mathcal{A}$-contraction of the second type such that $S(G_0)$ resides in $H_0$. Then $S$ has a  best proximity point in $G$.  Moreover, if $S$ is injective, then the best proximity point is unique.
\end{theorem}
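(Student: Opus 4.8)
The plan is to mimic the iterative construction of Theorem~\ref{t4}, but with one essential twist: since a second-type contraction controls the displacement between the images $Su_i$ rather than between the points $u_i$ themselves, the Cauchy property will land on the sequence $\{Su_n\}$, and I will have to recover convergence of $\{u_n\}$ through approximative compactness.

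First I would build a sequence $\{u_n\}$ in $G_0$ exactly as in Theorem~\ref{t4}: choose $u_0\in G_0$ and use $S(G_0)\subseteq H_0$ repeatedly to produce $u_{n+1}\in G_0$ with $d(u_{n+1},Su_n)=dist(G,H)$ for every $n$. Feeding the two consecutive relations $d(u_n,Su_{n-1})=dist(G,H)$ and $d(u_{n+1},Su_n)=dist(G,H)$ into the second-type inequality yields
$$d(Su_n,Su_{n+1})\le f\big(d(Su_{n-1},Su_n),d(Su_n,Su_{n-1}),d(Su_{n+1},Su_n)\big),$$
which has the form $r\le f(s,s,r)$ with $r=d(Su_n,Su_{n+1})$ and $s=d(Su_{n-1},Su_n)$. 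Property $(\mathcal{A}_1)$ then supplies a $k\in[0,1)$ with $d(Su_n,Su_{n+1})\le k\,d(Su_{n-1},Su_n)\le k^n d(Su_0,Su_1)$, so the same telescoping estimate used earlier shows $\{Su_n\}$ is Cauchy; since $H$ is closed in the complete space $M$, $Su_n\to v$ for some $v\in H$.

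The crucial step, where approximative compactness enters, is to pass from convergence of $\{Su_n\}$ to convergence of a subsequence of $\{u_n\}$. From $d(v,u_{n+1})\le d(u_{n+1},Su_n)+d(Su_n,v)=dist(G,H)+d(Su_n,v)$ together with the trivial bounds $dist(G,H)\le d(v,G)\le d(v,u_{n+1})$ valid for $v\in H$, a squeezing argument forces $d(v,G)=dist(G,H)$ and $d(v,u_n)\to d(v,G)$. Approximative compactness of $G$ with respect to $H$ then furnishes a subsequence $u_{n_k}\to u\in G$; continuity of $S$ gives $Su_{n_k}\to Su$, and since $Su_n\to v$ we identify $Su=v$. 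Finally, $d(v,u_{n_k})\to d(v,u)$ combined with $d(v,u_n)\to dist(G,H)$ forces $d(u,Su)=d(u,v)=dist(G,H)$, so $u$ is a best proximity point.

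For the uniqueness clause under injectivity, suppose $d(u,Su)=d(u^*,Su^*)=dist(G,H)$. Taking $x_1=u_1=u$ and $x_2=u_2=u^*$ in the defining implication (both hypotheses then hold) produces
$$d(Su,Su^*)\le f\big(d(Su,Su^*),0,0\big),$$
and property $(\mathcal{A}_2)$ yields $d(Su,Su^*)\le\alpha\,d(Su,Su^*)$ with $\alpha<1$, hence $Su=Su^*$; injectivity of $S$ then gives $u=u^*$. I expect the main obstacle to be the squeezing argument establishing $d(v,u_n)\to d(v,G)=dist(G,H)$, since without it approximative compactness cannot be invoked and there is no route from the Cauchyness of $\{Su_n\}$ to a genuine limit point $u$ of $\{u_n\}$ in $G$.
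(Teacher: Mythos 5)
Your proposal is correct and follows essentially the same route as the paper's proof: the same iterative construction of $\{u_n\}$ in $G_0$, the same application of $(\mathcal{A}_1)$ to make $\{Su_n\}$ Cauchy, the same squeeze $dist(G,H)\le d(v,G)\le d(v,u_{n+1})\le dist(G,H)+d(Su_n,v)$ to invoke approximative compactness, and the same uniqueness argument via injectivity. The only cosmetic difference is that you justify $d(Su,Su^*)=0$ cleanly through $(\mathcal{A}_2)$ (getting $r\le\alpha r$), whereas the paper writes the less tidy ``$\le k\cdot 0$''; your version is, if anything, the more careful one.
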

\begin{proof}
Since $G_0$ is non-empty, we choose an element $v_0\in G$. Then $Sv_0\in S(G_0)\subset H_0$. Then there is an element $v_1\in G_0$ such that $d(v_1, Sv_0)=dist(G, H)$.

Similarly, $Sv_1\in H_0$ and in the same way we find an element $v_2\in G_0$ such that $d(v_2, Sv_1)=dist(G, H)$. 

Therefore, continuing this process we arrive at a sequence $\{v_n\}$ of elements of $G_0$ such that 
$$d(v_{n+1}, Sv_n)=dist(G, H)\ \mbox{ for all }n\in \mathbb{N}.$$
Now note that
$$d(v_n, Sv_{n-1})=dist(G, H)$$
and
$$d(v_{n+1}, Sv_n)=dist(G, H)$$
for all $n\in \mathbb{N}$.

Since $S$ is a proximal $\mathcal{A}$-contraction of the second type, there exists an $f\in \mathcal{A}$ such that
$$d(Sv_n, Sv_{n+1})\le f(d(Sv_{n-1}, Sv_n), d(Sv_n, Sv_{n-1}), d(Sv_{n+1}, Sv_n)).$$
So there exists a $k\in [0, 1)$ such that
$$d(Sv_n, Sv_{n+1})\le kd(Sv_{n-1}, Sv_{n})$$
for all $n\in \mathbb{N}$ which, in fact, implies that
$$d(Sv_n, Sv_{n+1})\le k^nd(Sv_1, Sv_0).$$
Now for any $m,n\in \mathbb{N}$, we have
\begin{align*}
d(Sv_{m+n}, Sv_n)&\le d(Sv_{m+n}, Sv_{m+n-1})+d(Sv_{m+n-1}, Sv_{m+n-2})+\cdots +d(Sv_{n+1}, Sv_n)\\
&\le (k^{m+n-1}+k^{m+n-2}+\cdots+k^n)d(Sv_1, Sv_0)\\
&=k^n\frac{1-k^m}{1-k}d(Sv_1, Sv_0)\longrightarrow 0 \mbox{ as }m,n\to \infty.
\end{align*}
This shows that $\{Sv_n\}$ is a Cauchy sequence in $H$. Now closedness of $H$ in the complete metric space $(M, d)$ ensures the existence of an element $v\in H$ such that $Sv_n\longrightarrow v$ as $n\to \infty$.

Now,
\begin{align*}
dist(v, G)&\le d(v, v_n)\\
&\le d(v, Sv_{n-1})+d(Sv_{n-1}, v_n)\\
&=d(v, Sv_{n-1})+dist(G, H)\\
&\le d(v, Sv_{n-1})+dist(v, G)
\end{align*}
which implies that $d(v, v_n)\to dist(v, G)$ as $n\to \infty$.

Since $G$ is proximally compact with respect to $H$, $\{v_n\}$ has a convergent subsequence $\{v_{n_k}\}$ in $G$. Let $v_{n_k}\to u$ for some $u\in G$. 

Then
$$d(u, v)=\lim_{k\to \infty}d(v_{n_k}, Sv_{n_k-1})=dist(G, H).$$
Therefore, $u\in G_0$. 

Since $S$ is continuous, $Sv_{n_k}\to Su$ as $k\to \infty$. Again we have, $Sv_{n_k}\to v$ as $k\to \infty$. Hence $v=Su$.

Thus, $d(u, Su)=dist(G, H)$.

Finally, let $S$ be injective. Let $u^*$ be another element in $G$ such that $d(u^*, Su^*)=dist(G, H)$. Then,
$$d(Su, Su^*)\le f(d(Su, Su^*), d(Su, Su), d(Su^*, Su^*))=f(d(Su, Su^*), 0, 0)$$ which implies that
$$d(Su, Su^*)\le k\cdot 0=0.$$
Hence $Su=Su^*$. Since $S$ is injective, we have $u=u^*$ and the proof is complete.
\end{proof}
\begin{theorem}\label{t9}
Suppose that $(M,d)$ is complete, $G, H$ are   closed, $G$ is approximately compact with respect to $H$ and $G_0\ne \emptyset.$
Let $S:G\to H$ be a continuous proximal $\mathcal{A}'$-contraction of the second type such that $S(G_0)$ resides in $H_0$. Then $S$ has a  best proximity point in $G$.  Moreover, if $S$ is injective, then the best proximity point is unique.
\end{theorem}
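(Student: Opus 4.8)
The plan is to follow the same scheme as the proof of Theorem \ref{t5}, replacing the single property $(\mathcal{A}_1)$ by the $\mathcal{A}'$-properties in exactly the way that was done in passing from Theorem \ref{t4} to Theorem \ref{t8}. First I would reproduce the iterative construction verbatim: starting from any $v_0\in G_0$ and using $S(G_0)\subseteq H_0$, I build a sequence $\{v_n\}$ in $G_0$ with $d(v_{n+1}, Sv_n)=dist(G,H)$ for every $n$, so that both $d(v_n, Sv_{n-1})=dist(G,H)$ and $d(v_{n+1}, Sv_n)=dist(G,H)$ hold.

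The first genuine step is to turn the $\mathcal{A}'$-contraction of the second type into a geometric contraction on the sequence $\{Sv_n\}$. Applying the defining implication with $u_1=v_n$, $x_1=v_{n-1}$, $u_2=v_{n+1}$, $x_2=v_n$ yields
$$d(Sv_n, Sv_{n+1})\le f\big(d(Sv_{n-1}, Sv_n), 0, d(Sv_{n+1}, Sv_{n-1})\big),$$
since the displacement $d(Su_1, Sx_2)=d(Sv_n, Sv_n)$ vanishes. Using the triangle inequality $d(Sv_{n+1}, Sv_{n-1})\le d(Sv_{n+1}, Sv_n)+d(Sv_n, Sv_{n-1})$ together with the monotonicity property $(\mathcal{A}'_2)$, this casts the estimate into the form $r\le f(s, 0, r+s)$ with $r=d(Sv_n, Sv_{n+1})$ and $s=d(Sv_{n-1}, Sv_n)$, so $(\mathcal{A}'_1)$ supplies a $k\in[0,1)$ with $d(Sv_n, Sv_{n+1})\le k\,d(Sv_{n-1}, Sv_n)$ and hence $d(Sv_n, Sv_{n+1})\le k^n d(Sv_0, Sv_1)$. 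The usual geometric-series estimate then shows that $\{Sv_n\}$ is Cauchy in $H$, and closedness of $H$ in the complete space $(M,d)$ produces $v\in H$ with $Sv_n\to v$.

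From here the argument transcribes that of Theorem \ref{t5}. The sandwich
$$dist(v, G)\le d(v, v_n)\le d(v, Sv_{n-1})+dist(G,H)\le d(v, Sv_{n-1})+dist(v, G)$$
forces $d(v, v_n)\to dist(v, G)$; approximative compactness of $G$ with respect to $H$ then yields a subsequence $v_{n_k}\to u\in G$. Passing to the limit in $d(v_{n_k}, Sv_{n_k-1})=dist(G,H)$ gives $d(u,v)=dist(G,H)$, so $u\in G_0$, while continuity of $S$ gives $Sv_{n_k}\to Su$; since also $Sv_{n_k}\to v$, I conclude $v=Su$ and therefore $d(u, Su)=dist(G,H)$, i.e. $u$ is a best proximity point.

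For uniqueness under injectivity of $S$, suppose $d(u^*, Su^*)=dist(G,H)$ as well. Applying the contraction with $u_1=x_1=u$ and $u_2=x_2=u^*$ gives
$$d(Su, Su^*)\le f\big(d(Su, Su^*), d(Su, Su^*), d(Su, Su^*)\big),$$
so property $(\mathcal{A}'_3)$ forces $d(Su, Su^*)=0$, whence $Su=Su^*$ and, by injectivity, $u=u^*$. I expect the only point requiring care to be the second step, where one must recognise the displacement $d(Su_1, Sx_2)$ as $0$ and then combine the triangle inequality with $(\mathcal{A}'_2)$ to reach the precise shape $r\le f(s, 0, r+s)$ demanded by $(\mathcal{A}'_1)$; every remaining step merely repeats the earlier proofs.
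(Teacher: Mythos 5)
Your proposal is correct and follows essentially the same route as the paper's own proof: the same iterative construction, the identical application of the second-type $\mathcal{A}'$-contraction with $u_1=v_n$, $x_1=v_{n-1}$, $u_2=v_{n+1}$, $x_2=v_n$ combined with the triangle inequality and $(\mathcal{A}'_2)$ to reach the shape $r\le f(s,0,r+s)$ required by $(\mathcal{A}'_1)$, the same sandwich and approximative-compactness argument, and the same use of $(\mathcal{A}'_3)$ for uniqueness under injectivity. No gaps to report.
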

\begin{proof}
Proceeding as in Theorem~\ref{t5}, we can construct a sequence $\{v_n\}$ of elements of $G_0$ such that 
$$d(v_{n+1}, Sv_n)=dist(G, H)\ \mbox{ for all }n\in \mathbb{N}.$$
Now note that
$$d(v_n, Sv_{n-1})=dist(G, H)$$
and
$$d(v_{n+1}, Sv_n)=dist(G, H)$$
for all $n\in \mathbb{N}$.

Since $S$ is a proximal $\mathcal{A}'$-contraction of the second type, there exists an $f\in \mathcal{A}'$ such that
\begin{align*}
d(Sv_n, Sv_{n+1})&\le f(d(Sv_{n-1}, Sv_n), d(Sv_n, Sv_{n}), d(Sv_{n+1}, Sv_{n-1}))\\
&\le f(d(Sv_{n-1}, Sv_n), 0, d(Sv_{n+1}, Sv_{n})+d(Sv_{n}, Sv_{n-1})).
\end{align*}
So there exists a $k\in [0, 1)$ such that
$$d(Sv_n, Sv_{n+1})\le kd(Sv_{n-1}, Sv_{n})$$
for all $n\in \mathbb{N}$. Repeated use of the above, we get
$$d(Sv_n, Sv_{n+1})\le k^nd(Sv_1, Sv_0).$$
Now for any $m,n\in \mathbb{N}$, we have
\begin{align*}
d(Sv_{m+n}, Sv_n)&\le d(Sv_{m+n}, Sv_{m+n-1})+d(Sv_{m+n-1}, Sv_{m+n-2})+\cdots +d(Sv_{n+1}, Sv_n)\\
&\le (k^{m+n-1}+k^{m+n-2}+\cdots+k^n)d(Sv_1, Sv_0)\\
&=k^n\frac{1-k^m}{1-k}d(Sv_1, Sv_0)\longrightarrow 0 \mbox{ as }m,n\to \infty.
\end{align*}
Therefore $\{Sv_n\}$ is a Cauchy sequence in $H$. Since $H$ is a closed subset of the complete metric space $(M, d)$, we get an element $v\in H$ such that $Sv_n\longrightarrow v$ as $n\to \infty$.

Now,
\begin{align*}
dist(v, G)&\le d(v, v_n)\\
&\le d(v, Sv_{n-1})+d(Sv_{n-1}, v_n)\\
&=d(v, Sv_{n-1})+dist(G, H)\\
&\le d(v, Sv_{n-1})+dist(v, G)
\end{align*}
which implies that $d(v, v_n)\to dist(v, G)$ as $n\to \infty$.

Since $G$ is proximally compact with respect to $H$, $\{v_n\}$ has a convergent subsequence $\{v_{n_k}\}$ in $G$. Let $v_{n_k}\to u$ for some $u\in G$. 

Now
$$d(u, v)=\lim_{k\to \infty}d(v_{n_k}, Sv_{n_k-1})=dist(G, H).$$
This implies that $u\in G_0$. 

Applying continuity of $S$, we get $Sv_{n_k}\to Su$ as $k\to \infty$. Again we have, $Sv_{n_k}\to v$ as $k\to \infty$. Hence $v=Su$.

Thus, $d(u, Su)=dist(G, H)$.

We now take $S$ to be injective. Let $u^*$ be another element in $G$ such that $d(u^*, Su^*)=dist(G, H)$. Then,
$$d(Su, Su^*)\le f(d(Su, Su^*), d(Su, Su^*), d(Su^*, Su))$$ which implies that
$$d(Su, Su^*)=0.$$
Hence $Su=Su^*$. Since $S$ is injective, we have $u=u^*$ and the proof is complete.
\end{proof}
\begin{remark}
In the above two theorems, to ensure the uniqueness of best proximity point, injectiveness of $S$ is not necessary, which follows from the following examples.
\end{remark}
\begin{example}
We take $(M,d)=(\mathbb{R}^2,d)$ where
$d\Big((x_1, y_1), (x_2, y_2)\Big)=|x_1-x_2|+|y_1-y_2|$
for all $(x_1, y_1), (x_2, y_2)\in \mathbb{R}^2$;  $G=\{(x, y)\in \mathbb{R}^2:4\le x\le 5, 0\le y\le 1\}$, $H=\{(x, y)\in \mathbb{R}^2:0\le x\le 1, 0\le y\le 1\}$; $f(r, s, t)=\frac{1}{2}r+\frac{1}{5}(s+t)$, and 
 define $S:G\to H$ by
$$S(x, y)=\left(1, \frac{y}{2}\right)$$
for all $(x, y)\in \mathbb{R}^2$.

Let $u_1=(u'_1, u''_1), u_2=(u'_2, u''_2), x_1=(x'_1, x''_1), x_2=(x'_2, x''_2)\in G$ be such that
$$d(u_1, Sx_1)=dist(G, H)=3 \mbox{ and } d(u_2, Sx_2)=dist(G, H)=3.$$

Then, $Sx_1=\left(1, \frac{x''_1}{2}\right)$ and $Sx_2=\left(1, \frac{x''_2}{2}\right)$. Now,
\begin{align*}
&d(u_1, Sx_1)=3\\
&\Longrightarrow d\left((u'_1, u''_1), \left(1, \frac{x''_1}{2}\right)\right)=3\\
&\Longrightarrow |u'_1-1|+\left|u''_1-\frac{x''_1}{2}\right|=3\\
&\Longrightarrow u'_1-1+\left|u''_1-\frac{x''_1}{2}\right|=3\\
&\Longrightarrow u'_1+\left|u''_1-\frac{x''_1}{2}\right|=4
\end{align*}
which implies that $u'_1=4$ and $u''_1=\frac{x''_1}{2}.$
Similarly, $d(u_2, Sx_2)=3$ gives $u'_2=4$ and $u''_2=\frac{x''_2}{2}.$

Therefore,
\begin{align*}
d(Su_1, Su_2)&=d\left(\left(1, \frac{u''_1}{2}\right), \left(1, \frac{u''_2)}{2}\right)\right)\\
&=\left|\frac{u''_1}{2}-\frac{u''_2}{2}\right|\\
&=\left|\frac{x''_1}{4}-\frac{x''_2}{4}\right|=\frac{1}{4}|x''_1-x''_2|.
\end{align*}
Also, 
$$d(Sx_1, Sx_2)=d\left(\left(1, \frac{x''_1}{2}\right), \left(1, \frac{x''_2)}{2}\right)\right)=\frac{1}{2}|x''_1-x''_2|.$$
Now,
\begin{align*}
&d(Su_1, Su_2)-f(d(Sx_1, Sx_2), d(Su_1, Sx_1), d(Su_2, Sx_2))\\
&=d(Su_1, Su_2)-\left\{\frac{1}{2}d(Sx_1, Sx_2)+\frac{1}{5}\Big(d(Su_1, Sx_1) + d(Su_2, Sx_2)\Big)\right\}\\
&=\frac{1}{4}|x''_1-x''_2|-\frac{1}{4}|x''_1-x''_2|-\frac{1}{5}\Big(d(Su_1, Sx_1) + d(Su_2, Sx_2)\Big)\le 0
\end{align*}
which yields that
$$d(Su_1, Su_2)\le f(d(Sx_1, Sx_2), d(Su_1, Sx_1), d(Su_2, Sx_2))$$
which, in turn, implies that $S$ is a proximal $\mathcal{A}$-contraction of the second type.
 It is easy to check that $(4, 0)$ is the unique best proximity point of $S$ and $S$ is not injective.
\end{example}
\begin{example}
In this example, we take the metric space $(M,d)$ as above and choose
$$G=\Big\{(x, y)\in \mathbb{R}^2 : x=2, 0\le y\le 3\Big\}\bigcup \Big\{(x, y)\in \mathbb{R}^2 : 0\le x\le 2, y=2\Big\}$$
and
$$H=\Big\{(x, y)\in \mathbb{R}^2 : 0\le x\le 1, 0\le y\le 1\Big\}.$$
Then $dist(G, H)=1$. We define $S:G\to H$ by
$$S(x, y)=\left(\frac{x}{2}, 0\right)$$
for all $(x, y)\in \mathbb{R}^2$.
Also we choose $f\in \mathcal{A}'$ which is defined by $f(r,s,t)=\frac{1}{4}(s+t)$.
Let $u_1, u_2, x_1, x_2\in G$ be such that $d(u_1, Sx_1)=dist(G, H)=d(u_2, Sx_2)$. Then we have
\begin{align*}
&d\left((u'_1, u''_1), \left(\frac{x'_1}{2}, 0\right)\right)=1\\
&\Longrightarrow \left|u'_1-\frac{x'_1}{2}\right|+|u''_1|=1
\end{align*} 
which implies that $u''_1\le 1$ and $u'_1=2$.

Similarly, we get $u''_2\le 1$ and $u'_2=2$.

Now,
$$d(Su_1, Su_2)=d\left(\left(\frac{u'_1}{2},0\right), \left(\frac{u'_2}{2}, 0\right)\right)=\left|\frac{u'_1}{2}-\frac{u'_2}{2}\right|=0.$$

Therefore,
$$d(Su_1, Su_2)\le f(d(Sx_1, Sx_2), d(Sx_1, u_2), d(Sx_2, u_1))$$
whence $S$ is a proximal $\mathcal{A}'$-contraction of second type. 

One can easily  verify that $(4, 0)$ is the unique best proximity point of $S$ and $S$ is not injective.
\end{example}
\begin{remark}
In Theorem~\ref{t5} and Theorem~\ref{t9}, the injectiveness of $S$ can't be dropped, which follows from the following example.
\end{remark}
\begin{example}
Let us take $M=\mathbb{R}$, $d$ as the usual metric and $G=\left[ -1,-\frac{1}{2} \right] \cup \left[ \frac{1}{2} ,-1\right]$, $H=\{0\}$. We define $S:G\to H$ by $Sx=0$ for all $x\in G$. Then one can check that $S$ is proximal $\mathcal{A}$, $\mathcal{A}'$-contractions of the second type and $S$ has two best proximity points. It may be noted that $S$ is not an injection.
\end{example}
Next, we come up with the notions of strong proximal contractions, and present two results exhibiting the sufficient conditions in order to get best proximity points of strong proximal contractions.
\begin{definition} \label{d6}
A mapping $S:G\to H$ is said to be 
\begin{itemize}
\item[{(i)}] a strong proximal $\mathcal{A}$-contraction if there exists an $f\in \mathcal{A}$ such that for all $u_1, u_2, x_1, x_2\in G$ and for all $\gamma\in [1, 2)$
$$ \left. \begin{array}{ll}
    d(u_1, Sx_1)\le \gamma dist(G, H)\\
    d(u_2, Sx_2)\le \gamma dist(G, H)
\end{array}\right\}\\ 
$$
$$\Longrightarrow d(u_1, u_2)\le f(d(x_1, x_2), d(u_1, x_1), d(u_2, x_2))  +(\gamma-1)dist(G, H), 
$$
\item[{(ii)}] a strong proximal $\mathcal{A}'$-contraction if there exists an $f\in \mathcal{A}'$ such that for all $u_1, u_2, x_1, x_2\in G$ and for all $\gamma\in [1, 2)$
$$ \left. \begin{array}{ll}
    d(u_1, Sx_1)\le \gamma dist(G, H)\\
    d(u_2, Sx_2)\le \gamma dist(G, H)
\end{array}\right\}\\ 
$$
$$\Longrightarrow d(u_1, u_2)\le f(d(x_1, x_2), d(u_1, x_2), d(u_2, x_1))+(\gamma-1)dist(G, H).
$$
\end{itemize}
\end{definition}
\begin{theorem}\label{t7}
Suppose that $(M,d)$ is complete, $G,H$ are closed and $dist(G, H)>0$. Let $S:G\to H$ be a continuous strong proximal $\mathcal{A}$-contraction such that there exists a sequence $\{x_n\}$ in $G$ with $d(x_n, Sx_n)\to dist(G, H)$ as $n\to \infty$. Then $S$ has a unique best proximity point and     $\{x_n\}$ has a subsequence converging to that best proximity point.
\end{theorem}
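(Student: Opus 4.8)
The plan is to show first that the given approximating sequence $\{x_n\}$ is itself Cauchy, and then to identify its limit as the unique best proximity point. Write $D=dist(G,H)>0$. Since $x_n\in G$ and $Sx_n\in H$, we always have $d(x_n,Sx_n)\ge D$, so I would set $\gamma_n=d(x_n,Sx_n)/D\ge 1$; the hypothesis $d(x_n,Sx_n)\to D$ then gives $\gamma_n\to 1$, and in particular there is an $N$ with $\gamma_n<2$ for all $n\ge N$.

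The heart of the argument is a specialization of the contraction inequality in which $u_i$ is taken equal to $x_i$. For $m,n\ge N$ I would apply the strong proximal $\mathcal{A}$-contraction with $u_1=x_1=x_m$, $u_2=x_2=x_n$, and $\gamma=\max\{\gamma_m,\gamma_n\}\in[1,2)$. Because $d(x_m,Sx_m)=\gamma_m D\le\gamma D$ and $d(x_n,Sx_n)=\gamma_n D\le\gamma D$, both premises are met, and the conclusion reads
$$d(x_m,x_n)\le f\big(d(x_m,x_n),\,d(x_m,x_m),\,d(x_n,x_n)\big)+(\gamma-1)D=f\big(d(x_m,x_n),0,0\big)+(\gamma-1)D.$$
Property $(\mathcal{A}_2)$ gives $f(d(x_m,x_n),0,0)\le\alpha\,d(x_m,x_n)$ for some $\alpha\in[0,1)$, whence $(1-\alpha)d(x_m,x_n)\le(\gamma-1)D$, i.e.
$$d(x_m,x_n)\le\frac{(\max\{\gamma_m,\gamma_n\}-1)D}{1-\alpha}.$$
Since $\gamma_m,\gamma_n\to1$, the right-hand side tends to $0$ as $m,n\to\infty$, so $\{x_n\}$ is Cauchy. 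The crucial trick is that taking $u_i=x_i$ annihilates the two middle slots of $f$, reducing the entire estimate to the single axiom $(\mathcal{A}_2)$, while the additive term $(\gamma-1)D$ is precisely what absorbs the gap between $d(x_n,Sx_n)$ and $D$.

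With Cauchyness established, the rest follows the template of Theorems \ref{t4} and \ref{t5}. By completeness of $(M,d)$ and closedness of $G$ there is $x^*\in G$ with $x_n\to x^*$ (so, a fortiori, a subsequence of $\{x_n\}$ converges to $x^*$). Continuity of $S$ yields $Sx_n\to Sx^*$, hence $d(x_n,Sx_n)\to d(x^*,Sx^*)$; comparing with $d(x_n,Sx_n)\to D$ forces $d(x^*,Sx^*)=D=dist(G,H)$, so $x^*$ is a best proximity point. For uniqueness I would take any other best proximity point $y^*$, $d(y^*,Sy^*)=D$, and apply the contraction with $u_1=x_1=x^*$, $u_2=x_2=y^*$, $\gamma=1$ (admissible, since $d(x^*,Sx^*)=d(y^*,Sy^*)=D=\gamma D$); the additive term vanishes and $(\mathcal{A}_2)$ gives $d(x^*,y^*)\le f(d(x^*,y^*),0,0)\le\alpha\,d(x^*,y^*)$, so $d(x^*,y^*)=0$ and $x^*=y^*$.

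The only delicate point is the Cauchy estimate: one must select a single $\gamma\in[1,2)$ that serves both indices at once and verify both premises of the contraction simultaneously. This is exactly why $\gamma=\max\{\gamma_m,\gamma_n\}$ is the right choice, and why the admissibility constraint $\gamma<2$ confines the estimate to indices beyond $N$ — which is harmless for deducing that $\{x_n\}$ is Cauchy.
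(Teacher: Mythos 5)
Your proposal is correct, and it takes a genuinely different route from the paper's. The paper argues via Cantor's intersection theorem: it introduces the nested sets $F_p=\left\{x\in G: d(x,Sx)\le \left(1+\frac{1}{p}\right)dist(G,H)\right\}$, shows each is non-empty (using the hypothesized sequence $\{x_n\}$), closed (this is where continuity of $S$ enters there), and of diameter at most $\frac{1}{(1-\alpha)p}\,dist(G,H)$ --- this last estimate being exactly your specialization $u_i=x_i$ with $\gamma=1+\frac{1}{p}$ followed by $(\mathcal{A}_2)$ --- so that $\bigcap_p F_p=\{u\}$, and $u$ is the unique best proximity point, with the subsequence $\{x_{n_p}\}\subset F_p$ converging to $u$. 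You instead prove directly that the given sequence is Cauchy, using the same key trick ($u_i=x_i$ kills the middle slots of $f$, and $(\gamma-1)\,dist(G,H)$ absorbs the slack) but with $\gamma=\max\{\gamma_m,\gamma_n\}$. Your approach is more elementary --- no Cantor intersection theorem, no closedness of level sets, continuity of $S$ invoked only once to pass to the limit in $d(x_n,Sx_n)$ --- and it yields a strictly stronger conclusion: the \emph{whole} sequence $\{x_n\}$ converges, not merely a subsequence. Your explicit admissibility check $\gamma<2$ for $n,m\ge N$ is also a point the paper glosses over (its choice $\gamma=1+\frac{1}{p}$ equals $2$ at $p=1$, so strictly one should start from $p\ge 2$ there). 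What the paper's formulation buys in exchange is a quantitative stability statement: any two points satisfying $d(x,Sx)\le\left(1+\frac{1}{p}\right)dist(G,H)$ are within $\frac{1}{(1-\alpha)p}\,dist(G,H)$ of each other, which gives an explicit rate for the convergence of the subsequence and shows uniqueness at every approximation level, not just in the limit.
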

\begin{proof}
For each $p\in \mathbb{N}$, we define
$$F_p=\left\{x\in G:d(x, Sx)\le \left(1+\frac{1}{p}\right)dist(G, H)\right\}.$$
Since $d(x_n, Sx_n)\to dist(G, H)$, there exists an $n_p\in \mathbb{N}$ such that
$$d(x_{n_p}, Sx_{n_p})\le \left(1+\frac{1}{p}\right)dist(G, H)$$
which implies that $F_p$ is non-empty for each $p\in \mathbb{N}$. Since $S$ is continuous, each $F_p$ is closed. It is also evident that $F_{p+1}\subset F_p$ for each $p\in \mathbb{N}$.

If $x$ and $x^*$ are two elements of $F_p$, then we have
$$d(x, Sx)\le \left(1+\frac{1}{p}\right)dist(G, H)$$ and
$$d(x^*, Sx^*)\le \left(1+\frac{1}{p}\right)dist(G, H).$$
Since $S$ is a strong proximal $\mathcal{A}$-contraction, there exists $f\in \mathcal{A}$ such that
\begin{align*}
d(x, x^*)&\le f(d(x, x^*), d(x, x), d(x^*, x^*))+\frac{1}{p}dist(G, H)\\
&= f(d(x, x^*), 0, 0)+\frac{1}{p}dist(G, H)\\
&\le \alpha d(x, x^*)+\frac{1}{p}dist(G, H) \mbox{ for some }\alpha\in [0, 1).
\end{align*}
Therefore, we get
$$d(x, x^*)\le \frac{1}{(1-\alpha)p}dist(G, H).$$
Hence $diam(A_p)\to 0$ as $p\to \infty$. Therefore by Cantor's intersection theorem, we have 
$$\bigcap_p F_p=\{u\}$$
for some $u\in G$.

From this we see that,
$$dist(G, H)\le d(u, Su)\le \left(1+\frac{1}{p}\right)dist(G, H)$$
for each $p$. Hence we have
$$d(u, Su)=dist(G, H).$$
For the last part, it is to be noted that
$$d(x_{n_p}, u)\le \frac{1}{(1-\alpha)p}dist(G, H).$$
Hence the subsequence $\{x_{n_p}\}$ converges to $u$ and the theorem follows.
\end{proof}
\begin{remark}
The conclusions of the above theorem also hold if $S$ is a strong proximal $\mathcal{A}'$-contraction  instead of strong proximal $\mathcal{A}$-contraction. The proof being similar to above theorem, we omit it.
\end{remark}
We conclude this paper by presenting an example in support of Theorem~\ref{t7}  followed by a couple of remarks.
\begin{example}
Let us take $(M,d)=(\mathbb{R}, d)$, $d$ being the usual metric; $G=[0, 1]$ and $H=[5, 6]$ and take $f\in \mathcal{A}$, where $f(r,s,t)=\frac{1}{4}(r+s+t)$.
We define $S:G\to H$ by $Sx=6-x$ for all $x\in G$.

Let $u_1, u_2, x_1, x_2\in G$ be such that $d(u_1, Sx_1)\le \gamma dist(G, H)$ and $d(u_2, Sx_2)\le \gamma dist(G, H)$ for all $\gamma\in [1, 2]$. Then
\begin{align*}
|u_1-Sx_1|&\le 4\gamma\\
\Longrightarrow |u_1-6+x_1|&\le 4\gamma\\
\Longrightarrow 6-u_1-x_1&\le 4\gamma\\
\Longrightarrow u_1&\ge 6-4\gamma-x_1.
\end{align*}
Similarly, 
$$u_2\ge 6-4\gamma-x_2.$$
Without loss of generality, we assume that $u_1\ge u_2$.
Therefore,
\begin{align*}
d(u_1, u_2)&=|u_1-u_2|\\
&=u_1-u_2\\
&\le u_1-(6-4\gamma-x_2)\\
&=u_1-6+4\gamma+x_2\\
&\le 1+1-6+4\gamma\\
&=4(\gamma-1)=(\gamma-1)d(A, B).
\end{align*}
So, we get
$$d(u_1, u_2)\le f(d(x_1, x_2), d(u_1, x_1), d(u_2, x_2))+(\gamma-1)dist(G, H)$$
for any $f\in \mathcal{A}$ which implies that $S$ is a strong proximal $\mathcal{A}$-contraction. Consequently by Theorem \ref{t7}, $S$ has a unique best proximity point. Also, the unique best proximity point is $1$.
\end{example}
\begin{remark}
The best proximity point results of different kind proximal contractions due to Sadiq Basha \cite{BS11} can be obtained from our results by choosing $f(r,s,t)=\alpha r$, where $\alpha\in [0,1)$.
\end{remark}
\begin{remark}
By selecting different $f$ in Theorem~\ref{t4}, Theorem~\ref{t5} and Theorem~\ref{t7}, we can obtain the best proximity point results of the proximal versions of the contractions of Kannan \cite {Rk68} $\big(f(r,s,t)=\alpha(s+t), ~~\mbox{where}~~ 0\leq \alpha<\frac{1}{2} \big)$, Reich \cite{R3} $\big(f(r,s,t)=\alpha_1r+\alpha_2s+\alpha_3t,$ $\mbox{where}~~0\leq \alpha_1,\alpha_2,\alpha_3<1; \alpha_1+\alpha_2+\alpha_3<1\big)$, Bianchini \cite{B10} $\big( f(r,s,t)=\alpha \max\{s,t\}$, where $0\leq \alpha<1\big)$ and Khan \cite{K78} $\big(f(r,s,t)=\alpha \sqrt{st}$, where $0\leq \alpha<1 \big)$.
\end{remark}

\paragraph{\textbf{Acknowledgement}.}
The second named author is funded by CSIR, New Delhi, INDIA (Award Number: $09/973(0018)/2017$-EMR-I). 

\bibliographystyle{plain}

\end{document}